\documentclass{amsart}
\usepackage{amsmath, amssymb, amsthm}
\usepackage{amscd}
\usepackage{graphicx} 
\usepackage{enumerate}


\title[On equidistribution and heights]{Equidistribution and the heights of totally real and totally $p$-adic numbers}
  \author[Fili]{Paul Fili}
 \address{Department of Mathematics\\ University of Rochester, Rochester, NY 14627}
 \email{fili@math.rochester.edu}
 \author[Miner]{Zachary Miner}
\address{Department of Mathematics\\ University of Texas at Austin, TX 78712}
\email{zminer@math.utexas.edu}
 \subjclass[2010]{11G50, 11R80, 37P30}
 \keywords{Weil height, equidistribution, totally real, totally $p$-adic, Fekete-Szeg\H{o} theorem.}
\date{\today}

\usepackage[pdftitle={Equidistribution and the heights of totally real and totally p-adic numbers},pdfauthor={Fili and Miner},pdfstartview={}]{hyperref}

\usepackage[all]{xy}

\newtheorem{thm}{Theorem}

\newtheorem{cor}[thm]{Corollary}

\newtheorem*{thm*}{Theorem}
\newtheorem*{alg*}{Algorithm}
\newtheorem*{lemma*}{Lemma}

\theoremstyle{remark}

\newtheorem*{rmk*}{Remark}

\newtheorem*{notation*}{Notation}

\theoremstyle{definition}
\newtheorem{defn}{Definition}
\newtheorem*{defn*}{Definition}


\newcommand{\mybf}{\mathbb}

\newcommand{\bJ}{\mybf{J}}
\newcommand{\bU}{\mybf{U}}
\newcommand{\bP}{\mybf{P}}
\newcommand{\bR}{\mybf{R}}

\newcommand{\bC}{\mybf{C}}

\newcommand{\bQ}{\mybf{Q}}

\newcommand{\bA}{\mybf{A}}

\newcommand{\cD}{\mathcal{D}}

\newcommand{\al}{\alpha}

\providecommand{\abs}[1]{\lvert#1\rvert}

\newcommand{\Gal}{\operatorname{Gal}}

\newcommand{\ra}{\rightarrow}

\newcommand{\ep}{\epsilon}


\newcommand{\Tor}{\operatorname{Tor}}


\newcommand{\Qbar}{\overline{\mybf{Q}}}
\newcommand{\Kbar}{\overline{K}}

\newcommand{\Qab}{\mybf{Q}^\text{ab}}

\def\talltareesidedbox#1{\setbox0=\hbox{$#1$}\dimen0=\wd0 \advance\dimen0 by3pt\rlap{\hbox{\vrule height10pt width.4pt
 depth2pt \kern-.4pt\vrule height10.4pt width\dimen0 depth-10pt\kern-.4pt \vrule height10pt width.4pt depth2pt}}
 \relax \hbox to\dimen0{\hss$#1$\hss}}
\def\tareesidedbox#1{\setbox0=\hbox{$#1$}\dimen0=\wd0 \advance\dimen0 by3pt\rlap{\hbox{\vrule height8pt width.4pt
 depth2pt \kern-.4pt\vrule height8.4pt width\dimen0 depth-8pt\kern-.4pt \vrule height8pt width.4pt depth2pt}}
\relax \hbox to\dimen0{\hss$#1$\hss}}
\def\shorttareesidedbox#1{\setbox0=\hbox{$#1$}\dimen0=\wd0 \advance\dimen0 by3pt\rlap{\hbox{\vrule height7pt width.4pt
 depth2pt \kern-.4pt\vrule height7.4pt width\dimen0 depth-7pt\kern-.4pt \vrule height7pt width.4pt depth2pt}}
 \relax \hbox to\dimen0{\hss$#1$\hss}}

\newcommand{\hhat}{\hat{h}}

\newcommand{\Res}{\operatorname{Res}}

\newcommand{\Berk}{\mathrm{Berk}}

\newcommand{\Etor}{E_{\text{tors}}}

\newcommand{\PrePer}{\operatorname{PrePer}}

\newcommand{\sP}{\mathsf{P}}
\newcommand{\sA}{\mathsf{A}}

\begin{document}

\begin{abstract}
 C.J. Smyth was among the first to study the spectrum of the Weil height in the field of all totally real numbers, establishing both lower and upper bounds for the limit infimum of the height of all totally real integers and determining isolated values of the height. Later, Bombieri and Zannier established similar results for totally $p$-adic numbers and, inspired by work of Ullmo and Zhang, termed this the Bogomolov property. In this paper, we use results on equidistribution of points of low height to generalize both Bogomolov-type results to a wide variety of heights arising in arithmetic dynamics.
\end{abstract}

\thanks{The authors would like to thank Robert Rumely and the University of Georgia for organizing the 2011 VIGRE SSP in arithmetic dynamics and for their hospitality and financial support, during which time much of this research was conducted. We would also like to thank Tom Tucker and Lukas Pottmeyer for helpful conversations.}

\maketitle

\section{Introduction and Results}
Recall that an algebraic number is said to be \emph{totally real} if all of its Galois conjugates lie in the field $\bR$ under any choice of embedding $\Qbar\hookrightarrow\bC$. For example, if $\zeta$ is a root of unity, then $\al = \zeta+\zeta^{-1}$ is a totally real number. Schinzel \cite{SchinzelTotReal} proved the following:\footnote{In fact Schinzel's result  originally implied this result for integers, however, it is easy to see that it generalizes to totally real nonintegers as well; cf. e.g., \cite{GarzaRealConjugates, HohnOnGarza}.}
\begin{thm*}[Schinzel 1973]
Let $\al\in\bQ^{\mathrm{tr}}$, $\al\neq 0,\pm 1$ be a totally real number. Then 
\[
 h(\al) \geq h\left(\frac{1+\sqrt{5}}{2}\right) = \frac{1}{2} \log \frac{1+\sqrt{5}}{2}=0.2406059\ldots
\]
where $h$ denotes the absolute logarithmic Weil height.
\end{thm*}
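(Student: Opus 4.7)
The plan is to reduce the claim to a statement about totally positive algebraic integers and then attack it with an auxiliary polynomial. If $\al$ is totally real with $\al \neq 0, \pm 1$, the square $\beta := \al^2$ is totally positive, distinct from $0$ and $1$, and satisfies $h(\beta) = 2 h(\al)$. Hence it is enough to prove $h(\beta) \geq \log \phi$ for every totally positive $\beta \neq 0, 1$, where $\phi = (1+\sqrt{5})/2$.

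When $\beta$ is a totally positive algebraic integer, I would establish a sharp pointwise inequality of the form
\[
\log^+ x \;\geq\; a \log x \;+\; b \log |x - 1| \;+\; \log \phi, \qquad x > 0,\; x \neq 1,
\]
for suitable constants $a, b \geq 0$, and then sum it over the Galois conjugates $\beta_1, \ldots, \beta_{n'}$ of $\beta$. Using that $|\N(\beta)|, |\N(\beta - 1)| \geq 1$ (since $\beta$ and $\beta - 1$ are nonzero algebraic integers), this produces
\[
n' h(\beta) \;\geq\; a \log |\N(\beta)| \;+\; b \log |\N(\beta - 1)| \;+\; n' \log \phi \;\geq\; n' \log \phi,
\]
as desired. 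To find the optimal $a, b$, I would analyze $f(x) := \log^+ x - a \log x - b \log |x - 1|$ on each of $(0, 1)$ and $(1, \infty)$, locating a unique critical point on each interval. Equating the two local minima (forced by a symmetry argument to give $a = 1 - a - b$) and then optimizing the resulting one-parameter expression leads to the condition $5a^2 - 5a + 1 = 0$, whose relevant root is $a = (5 - \sqrt{5})/10$ with $b = 1/\sqrt{5}$. A direct simplification using $\phi + \phi^{-1} = \sqrt{5}$ then verifies that $\min_{x > 0} f(x) = \log \phi$, attained at $x = \phi^{\pm 2}$: precisely the conjugates of $\beta = \phi^2$ coming from the extremal choice $\al = \phi$.

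Extending from algebraic integers to arbitrary algebraic numbers requires incorporating the non-archimedean contributions, which are all non-negative: the leading coefficient of the minimal polynomial of $\al$ contributes $\frac{1}{n}\log a_0 \geq 0$ to the height, while the above pointwise inequality applied at each archimedean place still produces the $\log\phi$ lower bound for $h(\beta)$. This is essentially the argument of Garza and H\"ohn referenced in the footnote.

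The main obstacle is pinning down the sharp pointwise inequality and checking that its minimum is exactly $\log \phi$. Choosing the auxiliary polynomials $x$ and $x - 1$ is natural, but the precise emergence of the golden ratio is not transparent a priori: it demands the two-regime calculus optimization above, together with an algebraic simplification that relies on the characteristic identities of $\phi$. A more conceptual route would recast the bound through potential theory or the equilibrium measures that appear later in this paper.
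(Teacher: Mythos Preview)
The paper does not prove Schinzel's theorem; it is quoted as a background result, with a citation to Schinzel's original paper and (for the extension to non-integers) to Garza and H\"ohn in the footnote. So there is no proof in the paper to compare your proposal against.

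Your sketch is essentially the classical auxiliary-polynomial argument, and it is correct. The reduction to a totally positive $\beta=\al^2$, the pointwise inequality $\log^+ x \geq a\log x + b\log|x-1| + \log\phi$, and the optimization all check out: the symmetry condition $a = 1-a-b$ (i.e.\ $2a+b=1$) together with the critical points at $x=\phi^{\pm 2}$ and the identity $\phi^2-1=\phi$ give the minimum value $\log\phi$ exactly. The extension to non-integers via the non-archimedean contributions is indeed the content of the Garza/H\"ohn references.

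What the paper \emph{does} offer is a qualitative substitute: its Corollary~\ref{cor:totally-p-adic} with $p=\infty$ uses Bilu equidistribution to show $\liminf_{\al\in\bQ^{\mathrm{tr}}} h(\al)>0$, since the unit circle meets $\overline{\bR}$ in the null set $\{\pm 1\}$. That argument is softer and more general (it works for any dynamical height) but gives no explicit constant; the sharp value $\tfrac12\log\phi$ genuinely requires the explicit inequality you outline. Your closing remark about a ``more conceptual route'' via equilibrium measures is thus exactly the trade-off the paper illustrates: equidistribution gives existence of a gap, auxiliary polynomials give its size.
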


A natural generalization of the concept of a totally real number is that of a \emph{totally $p$-adic number}, that is, an algebraic number $\al$ all of whose Galois conjugates lie in $\bQ_p$ for any embedding $\Qbar\hookrightarrow \bC_p$, where $\bC_p$ denotes the completion of the algebraic closure of $\bQ_p$. Equivalently, we can ask that the minimal polynomial of $\al$ split over $\bQ_p$. Note that unlike $\bC/\bR$, the extension $\bC_p/\bQ_p$ is of infinite degree. Bombieri and Zannier \cite{BombieriZannierNote} proved the following analogue of the results of Smyth and Flammang:
\begin{thm*}[Bombieri and Zannier 2001]
Let $L/\bQ$ be a normal extension (possibly of infinite degree) and $S$ is the set of finite rational primes such that $L_p/\bQ_p$ is Galois (in particular, of finite degree), then
\[
 \liminf_{\al\in L} h(\al) \geq \frac{1}{2} \sum_{p\in S} \frac{\log p}{e_p(p^{f_p} + 1)}
\]
where $e_p$ and $f_p$ denote the ramification and inertial degrees of $L_p/\bQ_p$, respectively.
\end{thm*}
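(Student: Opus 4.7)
The plan is to run an elementary discriminant/product-formula argument combined with a projective pigeonhole at each $p\in S$. Fix $\al\in L$ which is not a root of unity, set $n=[\bQ(\al):\bQ]$, and let $f(x)=a_n\prod_{i=1}^n(x-\al_i)\in\bZ[x]$ be its primitive minimal polynomial. The discriminant $D(f)=a_n^{2n-2}\prod_{i<j}(\al_i-\al_j)^2$ is a nonzero integer, so $|D(f)|_q\leq 1$ at every finite place $q$, and the product formula yields
\[
\log|D(f)|_\infty\;=\;\sum_{q\text{ finite}}-\log|D(f)|_q\;\geq\;\sum_{p\in S}-\log|D(f)|_p.
\]
I will upper-bound the left side in terms of $h(\al)$ and lower-bound each term on the right via pigeonhole on the residue field of $L_p$.

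For the archimedean estimate I would apply Hadamard's inequality to the Vandermonde matrix $V=(\al_i^{j-1})_{i,j=1}^n$, whose determinant squared is $\prod_{i<j}(\al_i-\al_j)^2$. Together with the identity $|a_n|\prod_i\max(1,|\al_i|)=M(f)=e^{nh(\al)}$ this yields the standard bound $|D(f)|_\infty\leq n^n M(f)^{2(n-1)}$, hence
\[
\log|D(f)|_\infty\;\leq\;2n(n-1)\,h(\al)+n\log n.
\]

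For the local estimate at $p\in S$, the Galois hypothesis places every $\al_i$ inside $L_p$, with $v_p$ taking values in $\tfrac{1}{e_p}\bZ\cup\{+\infty\}$. I would partition the $\al_i$ by their images in $\bP^1(\bF_{p^{f_p}})$ (of cardinality $p^{f_p}+1$) with class sizes $(n_c)$; convexity gives
\[
\sum_c\binom{n_c}{2}\;\geq\;\frac{n^2}{2(p^{f_p}+1)}-\frac{n}{2}.
\]
Two conjugates in the same class satisfy $\hat d_p(\al_i,\al_j)\geq 1/e_p$, where the chordal valuation $\hat d_p(x,y):=v_p(x-y)-\min(0,v_p(x))-\min(0,v_p(y))$ is nonnegative everywhere. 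Summing $\hat d_p$ over all pairs $i<j$ and invoking Gauss's lemma for the primitive polynomial $f$ (which asserts $v_p(a_n)=\sum_i\max(0,-v_p(\al_i))$), the corrections for negative-valuation conjugates cancel exactly against the $a_n^{2n-2}$ factor in $D(f)$, leaving $\sum_{i<j}\hat d_p(\al_i,\al_j)=\tfrac12 v_p(D(f))$, and therefore
\[
-\log|D(f)|_p\;\geq\;\frac{n^2\log p}{e_p(p^{f_p}+1)}-\frac{n\log p}{e_p}.
\]

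Combining these estimates for any finite $S'\subseteq S$ and dividing through by $2n(n-1)$ produces
\[
h(\al)\;\geq\;\frac{n}{2(n-1)}\sum_{p\in S'}\frac{\log p}{e_p(p^{f_p}+1)}\;-\;\frac{1}{2(n-1)}\sum_{p\in S'}\frac{\log p}{e_p}\;-\;\frac{\log n}{2(n-1)}.
\]
Since Northcott guarantees that any $\al\in L$ realizing values close to $\liminf_{\al\in L}h(\al)$ must have arbitrarily large degree, one may let $n\to\infty$ to obtain $\tfrac12\sum_{p\in S'}\frac{\log p}{e_p(p^{f_p}+1)}$ and then $S'\uparrow S$ to recover the claimed inequality. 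I expect the main obstacle to be the careful bookkeeping in the pigeonhole step: one must verify that the $\min(0,v_p(\al_i))$ corrections in the chordal distance $\hat d_p$ cancel precisely with the $(2n-2)v_p(a_n)$ contribution inside $D(f)$, which is exactly the content of Gauss's lemma on primitive integral polynomials. Once that identity is in hand, the remainder of the argument is a routine combination of convexity, Hadamard, and the product formula.
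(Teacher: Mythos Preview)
This theorem is quoted in the paper as a result of Bombieri and Zannier and is not reproved there; the paper's own equidistribution methods recover only the qualitative statement $\liminf_{\al\in L} h(\al)>0$ for a single prime (cf.\ Corollary~\ref{cor:totally-p-adic}), not the explicit summed lower bound. Your proposal is a correct and faithful sketch of the original Bombieri--Zannier discriminant argument: the Mahler--Hadamard bound $|D(f)|_\infty\le n^n M(f)^{2n-2}$, the projective pigeonhole on $\bP^1(\bF_{p^{f_p}})$ at each $p\in S$, the Gauss-lemma identity $\sum_{i<j}\hat d_p(\al_i,\al_j)=\tfrac12\,v_p(D(f))$ (your cancellation via $v_p(a_n)=\sum_i\max(0,-v_p(\al_i))$ is exactly right), and the passage to the limit via Northcott all check out as you describe them. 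There is nothing further in the paper to compare against.
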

\noindent Notice that this immediately implies that for any such field, the set of points of zero height (i.e., for the standard Weil height, $0$ together with the set of roots of unity in the field) is in fact finite---a fact that was obvious for $\bQ^{\mathrm{tr}}$ but is far less so for a field $L$ as above.

Bombieri and Zannier, in analogy to work of Ullmo and Zhang on the Bogomolov conjecture (see \cite{ZhangPositive,Zhang,Ullmo98}), termed this property that there is a lower bound away from zero for the height of all numbers not roots of unity in the field the \emph{Bogomolov property}. Let us make the following more precise definitions. Let $K$ be a number field and $\varphi\in K(z)$ a rational map of degree at least $2$, and denote by $h_\varphi$ its associated \emph{canonical height} following \cite{CallSilverman}. We can then generalize the notion of Bombieri and Zannier and ask which extensions $F/K$ satisfy an analogue of the Bogomolov property with respect to the height $h_\varphi$. Specifically:
\begin{defn}
 We say that a field $F/K$ satisfies the \emph{strong Bogomolov property with respect to $h_\varphi$} if
 \[
  \liminf_{\al\in\bP^1(\Kbar)} h_\varphi(\al) > 0.
 \]
We say $F/K$ satisfies the \emph{weak Bogomolov property with respect to $h_\varphi$} if
 \[
  \liminf_{\substack{\al\in\bP^1(\Kbar)\\ h_\varphi(\al)>0}} h_\varphi(\al) > 0.
 \]
\end{defn}
We note that there well-known examples of fields which satisfy the weak Bogomolov property, but not the strong, specifically, Amoroso and Dvornicich \cite{AmorosoDvornicich} showed that the maximal abelian extension $\Qab$ satisfies the weak Bogomolov property, while clearly since it contains all of the roots of unity, it does not satisfy the strong Bogomolov property.

The strong Bogomolov property, unlike the weak, seems to be intimately connected to equidistribution of small points. For example, Zhang \cite{Zhang}, as a direct consequence of his theorem on the equidistribution of strict small sequences of points on an abelian variety defined over a number field at the archimedean place, was able to show the following:
\begin{thm*}[Zhang 1998]
 Let $A$ be an abelian variety defined over a number field $K$, and $\hhat$ be a N\'eron-Tate height associated to a symmetric ample line bundle on $A$. Then the set of points $A(K\bQ^{\mathrm{tr}})$ satisfies the following strong Bogomolov property:
\[
 \liminf_{x \in A(K\bQ^{\mathrm{tr}})} \hhat(x) > 0.
\]
\end{thm*}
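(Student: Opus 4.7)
The plan is to deduce the strong Bogomolov property for $A(K\bQ^{\mathrm{tr}})$ from Zhang's equidistribution theorem applied at an archimedean place of $K$, combined with the observation that the real locus $A(\bR)$ has measure zero in $A(\bC)$ with respect to the canonical measure attached to a symmetric ample line bundle. Since replacing $K$ by a finite extension only enlarges $A(K\bQ^{\mathrm{tr}})$ while preserving the canonical height, I may freely assume $K$ admits a real archimedean embedding $\sigma$.

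Suppose for contradiction that there is an infinite sequence of pairwise distinct points $x_n\in A(K\bQ^{\mathrm{tr}})$ with $\hhat(x_n)\to 0$. First reduce to the case that $(x_n)$ is \emph{strict}, meaning no proper closed subvariety contains infinitely many $x_n$, by induction on $g=\dim A$ (the base case $g=0$ is trivial, since distinct points cannot all lie in a single point). If some proper closed subvariety contains an infinite subsequence, then by the Bogomolov conjecture of Ullmo--Zhang an irreducible component containing infinitely many $x_n$ is a torsion translate $\tau+B$ of a proper abelian subvariety $B$. Because $\hhat$ is a quadratic form vanishing on torsion and pairing trivially with torsion, we have $\hhat(x_n-\tau)=\hhat(x_n)\to 0$, and after enlarging $K$ to contain $\tau$ we obtain a small sequence in $B(K\bQ^{\mathrm{tr}})$, contradicting the induction hypothesis.

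Now apply Zhang's archimedean equidistribution theorem to the strict small sequence $(x_n)$ via $\sigma$: the discrete probability measures uniformly supported on $\Gal(\Kbar/K)\cdot x_n$, pushed forward to $A(\bC)$ through $\sigma$, converge weakly to the canonical measure $\mu_\sigma$ associated to the symmetric ample line bundle. For an abelian variety, $\mu_\sigma$ is a positive multiple of the normalized Haar measure on the compact complex Lie group $A(\bC)\cong \bC^g/\Lambda$. On the other hand, because $K\bQ^{\mathrm{tr}}/K$ is Galois, every conjugate $\tau(x_n)$ with $\tau\in\Gal(\Kbar/K)$ remains in $A(K\bQ^{\mathrm{tr}})$; and since $\sigma$ restricts to an embedding of $K\bQ^{\mathrm{tr}}$ into $\bR$ (both $K$ and $\bQ^{\mathrm{tr}}$ embed into $\bR$), every such conjugate lies in $A(\bR)$. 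The approximating measures are therefore supported on $A(\bR)$, a real submanifold of dimension $g$ inside the real $2g$-dimensional $A(\bC)$, which has Haar measure zero. This contradicts equidistribution. The main obstacle is the reduction to strict sequences, which requires the Bogomolov conjecture; in Zhang's original treatment equidistribution and Bogomolov are not isolated but proved in tandem by a single induction, since subvarieties carrying infinitely many small points play a genuinely essential role in the argument.
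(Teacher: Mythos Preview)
The paper does not prove this statement itself; it is quoted as Zhang's theorem, with the single remark that it is ``a direct consequence of his theorem on the equidistribution of strict small sequences of points on an abelian variety defined over a number field at the archimedean place.'' Your proposal follows exactly this indicated route: assume a small sequence exists, reduce to a strict (generic) sequence via the Bogomolov conjecture, apply archimedean equidistribution, and derive a contradiction from the fact that $A(\bR)$ is a real $g$-manifold inside the real $2g$-dimensional torus $A(\bC)$ and hence has Haar measure zero.

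There is, however, a genuine gap in your opening reduction. You claim that by replacing $K$ with a finite extension you may assume $K$ has a real archimedean place. The monotonicity $A(K\bQ^{\mathrm{tr}})\subset A(K'\bQ^{\mathrm{tr}})$ for $K\subset K'$ is correct, but it cannot produce a real place: if $K$ is totally imaginary (e.g.\ $K=\bQ(i)$), then every finite extension of $K$ is also totally imaginary. Without a real place of $K$, an archimedean embedding of $K\bQ^{\mathrm{tr}}$ need not land in $\bR$; for $K=\bQ(i)$ the image is dense in $\bC$, and the measure-zero argument collapses entirely. Once a real place of $K$ is genuinely available, the rest of your argument is correct: any embedding of $K\bQ^{\mathrm{tr}}$ into $\bC$ extending a real embedding of $K$ does land in $\bR$ (since elements of $\bQ^{\mathrm{tr}}$ have only real conjugates over $\bQ$), the induction on $\dim A$ via torsion translates is sound, and your closing remark on the interdependence of equidistribution and Bogomolov in Zhang's treatment is apt.
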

\noindent Note that Zhang's result originally stated that $\Tor(A(K\bQ^{\mathrm{tr}}))$ is finite and there exists an $\ep>0$ such that the set
$
 \{ x\in A(K\bQ^{\mathrm{tr}}) : \hhat(x) < \ep \}
$
is finite, but this is equivalent to the positivity of the limit infimum for our choice of height.

We will establish an analogous result for the canonical heights associated to iteration of rational maps on $\bP^1$. Since we wish to allow arbitrary number fields as our base fields, we will make the following generalization of the notions of totally real and totally $p$-adic:

\begin{defn}
Fix a base number field $K$, and let $S$ be a set of places of $K$. For each $v\in S$, we choose a Galois extension $L_v/K_v$. We say that $\al$ is \emph{totally $L_S/K$} if, for each $v\in S$, all of the $K$-Galois conjugates of $\al$ lie in $L_v$ for each embedding $\Kbar\hookrightarrow \bC_v$.
\end{defn}

Notice that the set of all totally $L_S/K$ numbers forms a field $L$ which is a normal extension of $K$.

Our main result in this paper is a broad generalization of these results to the dynamical heights arising from the iteration of rational functions. We give a simple measure-theoretic criterion which characterizes the situations above.

\begin{thm}\label{thm:main}
 Let $\varphi$ be a rational map defined over a number field $K$, and fix a finite set $S$ of places of $K$ and corresponding Galois extensions $L_v/K_v$ for each $v\in S$. Let $L$ be the field of numbers all of whose $K$-Galois conjugates lie in $L_v$ for each $v\in S$, $\mu_{\varphi,v}$ be the $v$-adic canonical measure associated to $\varphi$, and $\overline{L_v}$ the topological closure of $L_v$ in the Berkovich projective line $\sP^1(\bC_v)$.
 
 If there exists a place $v\in S$ for which
\begin{equation*}\label{eqn:condition}
 \mu_{\varphi,v}(\overline{L_v}) < 1,
\end{equation*}
then $L$ satisfies the strong Bogomolov property with respect to $h_\varphi$, that is,
\[
 \liminf_{\al\in L} h_\varphi(\al)>0.
\]
 Further, if $\varphi$ is a polynomial, then the converse is also true, namely, if $L$ satisfies the strong Bogomolov property with respect to $h_\varphi$, then there exists a place $v$ of $L$ such that $\mu_{\varphi,v}(\overline{L_v}) < 1$.
\end{thm}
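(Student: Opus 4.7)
The plan is to treat the two implications separately. The forward implication is a formal consequence of dynamical equidistribution; the converse, valid in the polynomial case, requires a Fekete--Szeg\H{o}-type construction adapted to the canonical height $h_\varphi$.

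\textbf{Forward direction.} Assume some $v \in S$ satisfies $\mu_{\varphi,v}(\overline{L_v}) < 1$, and suppose for contradiction that $L$ fails the strong Bogomolov property: there exist pairwise non-conjugate $\alpha_n \in L$ with $h_\varphi(\alpha_n) \to 0$. Fix an embedding $\Kbar \hookrightarrow \bC_v$ and let $\mu_n$ be the normalized counting measure on the Galois orbit of $\alpha_n$, viewed as a probability measure on the Berkovich projective line $\sP^1(\bC_v)$. The dynamical equidistribution theorem of Baker--Rumely, Chambert-Loir, and Favre--Rivera-Letelier then gives that $\mu_n$ converges weakly to $\mu_{\varphi,v}$. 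Because each $\alpha_n$ is totally $L_S/K$, every Galois conjugate of $\alpha_n$ lies in $L_v \subset \overline{L_v}$, so $\mathrm{supp}(\mu_n) \subset \overline{L_v}$. Since $\overline{L_v}$ is closed, the portmanteau theorem yields
\[
\mu_{\varphi,v}(\overline{L_v}) \geq \limsup_n \mu_n(\overline{L_v}) = 1,
\]
contradicting the hypothesis.

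\textbf{Converse (polynomial case).} Suppose $\varphi \in K[z]$ is a polynomial and $\mu_{\varphi,v}(\overline{L_v}) = 1$ for every $v \in S$. For a polynomial, $\mu_{\varphi,v}$ is the equilibrium measure of the filled Julia set $\cK_{\varphi,v}$, so this forces $\cK_{\varphi,v} \subset \overline{L_v}$ at each $v \in S$. I will produce distinct $\alpha_n \in L$ with $h_\varphi(\alpha_n) \to 0$. The approach is to apply an adelic Fekete--Szeg\H{o}-type theorem for $h_\varphi$ to the adelic compact set
\[
E := \prod_{v \in S} \overline{L_v} \;\times\; \prod_{v \notin S} \cK_{\varphi,v},
\]
whose adelic dynamical capacity is maximal since each local factor contains the support of the local equilibrium measure. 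The dynamical Fekete--Szeg\H{o} theorem (after Rumely and Baker--Rumely) then produces an infinite sequence of algebraic numbers whose $K$-Galois conjugates all lie in $E_v$ at each place and whose $h_\varphi$-values tend to zero. To sharpen ``conjugates in $\overline{L_v}$'' to ``conjugates in $L_v$'', one perturbs the constructed minimal polynomials using Krasner's lemma: the Fekete--Szeg\H{o} construction outputs classical algebraic points, and $L_v$-valued roots are preserved under sufficiently small coefficient perturbations, so one can arrange the full Galois orbit to sit in $L_v$, producing the required sequence in $L$.

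\textbf{Main obstacle.} The forward direction is routine once equidistribution is invoked. The real work lies in the converse, and in two places in particular: (i) having a sufficiently clean dynamical Fekete--Szeg\H{o} theorem for the canonical height $h_\varphi$, and (ii) sharpening its conclusion from ``conjugates in the Berkovich closure $\overline{L_v}$'' to ``conjugates in $L_v$ itself''---these differ by passage to the completion $\widehat{L_v}$, which can strictly contain $L_v$ when $L_v/K_v$ is an infinite Galois extension. The restriction to polynomials is natural because the transfinite-diameter/filled-Julia-set framework underlying Fekete--Szeg\H{o} is cleanly available there; for general rational maps, the global adelic potential theory on $\sP^1$ is considerably more delicate and no converse of this form has been established.
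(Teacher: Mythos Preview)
Your forward direction is correct and essentially the paper's argument: the paper makes the portmanteau step explicit via Urysohn's lemma (building a continuous $f$ equal to $1$ on a compact $E\subset\overline{L_v}^c$ of positive measure and $0$ on $L_v$), but this is just a hand-unrolled portmanteau inequality, so there is no substantive difference.

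The converse is where your proposal diverges from the paper and where there is a real gap. First, a small but genuine error: from $\mu_{\varphi,v}(\overline{L_v})=1$ you cannot conclude $\cK_{\varphi,v}\subset\overline{L_v}$. What follows (since $\overline{L_v}^c$ is open and $\supp\mu_{\varphi,v}=J_v(\varphi)$) is only $J_v(\varphi)\subset\overline{L_v}$; the bounded Fatou components need not lie in $\overline{L_v}$. More importantly, your plan to apply Fekete--Szeg\H{o} to the adelic set $\prod_{v\in S}\overline{L_v}\times\prod_{v\notin S}\cK_{\varphi,v}$ and then ``Krasner-perturb'' the output from $\overline{L_v}$ down to $L_v$ is both unnecessary and underspecified. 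A standard Fekete--Szeg\H{o} theorem only places conjugates in an \emph{open neighborhood} of the adelic set, not in the set itself; and the Krasner step you sketch would have to move an entire Galois orbit simultaneously into each $L_v$ while keeping the height small, which you do not explain how to arrange (and which is delicate when $L_v/K_v$ is infinite, as you yourself flag).

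The paper avoids all of this by invoking a sharper tool: Rumely's Fekete--Szeg\H{o} theorem \emph{with local splitting conditions}. One takes $S'$ to be $S$ together with the archimedean and bad-reduction places, forms the adelic Julia set $\bJ=\prod_v J_v(\varphi)$ (which has global capacity $1$ by the product formula applied to the leading coefficient), and for each $\epsilon>0$ surrounds it by the open adelic neighborhood $\bU=\prod_{v\notin S'}\cD(0,1)\times\prod_{v\in S'}h_{\varphi,v}^{-1}([0,\epsilon))$. Rumely's theorem then directly produces infinitely many algebraic numbers that are totally $L_S/K$ \emph{and} have all conjugates in $\bU$; the hypothesis $\mu_{\varphi,v}(\overline{L_v})=1$ is exactly what is needed to verify the splitting-condition hypothesis of that theorem. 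Any such number has $h_\varphi(\alpha)\leq |S'|\cdot\epsilon$, and letting $\epsilon\to 0$ gives the desired sequence. So the missing ingredient in your proposal is precisely the splitting-conditions version of Fekete--Szeg\H{o}, which renders the Krasner workaround moot.
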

\noindent In specific case of the maximal totally real field of $\bQ$, an independent proof which does not require the assumption that the $\varphi$ is a polynomial in the converse is due to Pottmeyer \cite{PottThesis}.

Before we prove our theorem, let us state a few of the interesting corollaries of our result:
\begin{cor}\label{cor:preperiodic-pts}
 Let $\varphi/K$ and $L_S/K$ be as in the theorem, and suppose there exists a place $v\in S$ such that $\mu_{\varphi,v}(\overline{L_v})<1$. Then the set $\PrePer_\varphi(L)$ of preperiodic points of $\varphi$ which are defined over the field $L$ of all totally $L_S/K$ numbers is finite.
\end{cor}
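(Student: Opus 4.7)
The plan is to deduce Corollary \ref{cor:preperiodic-pts} as an immediate consequence of Theorem \ref{thm:main} together with the classical fact, due to Call and Silverman, that a point $\alpha \in \bP^1(\Kbar)$ is preperiodic under $\varphi$ if and only if its canonical height vanishes, i.e., $h_\varphi(\alpha) = 0$. In particular, $\PrePer_\varphi(L) = \{\alpha \in L : h_\varphi(\alpha) = 0\}$.

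First, I would invoke Theorem \ref{thm:main} directly: the hypothesis $\mu_{\varphi,v}(\overline{L_v}) < 1$ for some $v \in S$ gives
\[
\liminf_{\alpha \in L} h_\varphi(\alpha) > 0.
\]
Next, I would unpack what this liminf statement means for a (possibly infinite) set $L$: there exists some $\epsilon > 0$ such that only finitely many $\alpha \in L$ satisfy $h_\varphi(\alpha) < \epsilon$. Indeed, otherwise we could extract a sequence of distinct elements $\alpha_n \in L$ with $h_\varphi(\alpha_n) \to 0$, contradicting positivity of the liminf. In particular, the set $\{\alpha \in L : h_\varphi(\alpha) = 0\}$ is finite.

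Combining these two observations, $\PrePer_\varphi(L)$ is finite, which is the claim. No step here is a genuine obstacle: the corollary is truly a corollary, with all the analytic and arithmetic difficulty already absorbed into Theorem \ref{thm:main}, and the only remaining content is the standard identification of preperiodic points as the height-zero locus of $h_\varphi$.
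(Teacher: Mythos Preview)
Your proof is correct and follows essentially the same route as the paper: argue by contradiction that infinitely many preperiodic points in $L$ would force $\liminf_{\alpha\in L} h_\varphi(\alpha)=0$, contrary to Theorem~\ref{thm:main}. You spell out the Call--Silverman characterization and the meaning of the liminf a bit more explicitly than the paper does, but the argument is the same.
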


\begin{cor}\label{cor:full-measure}
 Let $\varphi/K$ and $L_S/K$ be as in the theorem, and $L$ the field of all totally $L_S/K$ numbers. If there exists a sequence of distinct algebraic numbers $\{\al_n\}\subset L$ such that $h_\varphi(\al_n)\ra 0$, then $\mu_{\varphi,v}(\overline{L_v})=1$ for every place $v\in S$.
\end{cor}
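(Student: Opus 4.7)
The plan is to derive this corollary as the direct contrapositive of the forward (first) implication in Theorem \ref{thm:main}. The key observation is that the strong Bogomolov conclusion $\liminf_{\al \in L} h_\varphi(\al) > 0$ is, under the convention in force here, equivalent to asserting that for some $c > 0$ the set $\{\al \in L : h_\varphi(\al) < c\}$ is finite. Thus the existence of an infinite sequence of distinct algebraic numbers $\{\al_n\} \subset L$ with $h_\varphi(\al_n) \to 0$ is precisely the negation of the strong Bogomolov property for $L$.

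First I would record this equivalence (or simply observe that a sequence of infinitely many distinct $\al_n \in L$ with $h_\varphi(\al_n) \to 0$ rules out any positive lower bound on $h_\varphi$ outside a finite exceptional subset of $L$). Consequently the hypothesis of Corollary \ref{cor:full-measure} forces $\liminf_{\al \in L} h_\varphi(\al) = 0$, i.e., the failure of the strong Bogomolov property for $L$.

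Next I would apply the contrapositive of the forward direction of Theorem \ref{thm:main}: if the strong Bogomolov property fails for $L$, then no place $v \in S$ can satisfy $\mu_{\varphi,v}(\overline{L_v}) < 1$. Since $\mu_{\varphi,v}$ is by construction a probability measure on the Berkovich projective line $\sP^1(\bC_v)$, we always have $\mu_{\varphi,v}(\overline{L_v}) \leq 1$, so this inequality must in fact be an equality, yielding $\mu_{\varphi,v}(\overline{L_v}) = 1$ for every $v \in S$, as desired. The only real obstacle is unwinding the liminf convention correctly; beyond that, the corollary contains no content not already packaged in the forward direction of Theorem \ref{thm:main}.
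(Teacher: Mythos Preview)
Your argument is correct and matches the paper's own proof, which simply notes that the corollary is immediate as the contrapositive of the first part of Theorem~\ref{thm:main}. Your additional remarks on the liminf convention and on $\mu_{\varphi,v}$ being a probability measure are sound but not strictly necessary.
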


\begin{cor}\label{cor:totally-p-adic}
 Let $L_p/\bQ_p$ be a Galois extension with $L_p =\bR$ if $p=\infty$. Then 
\[
 \liminf_{\substack{\al\in\bP^1(\Qbar)\\ \al\text{ is totally }L_p}} h(\al)>0.
\]
\end{cor}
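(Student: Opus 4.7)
The plan is to apply Theorem \ref{thm:main} with $K=\bQ$, $\varphi(z)=z^2$ (whose canonical height $h_\varphi$ is precisely the standard Weil height $h$), and $S=\{p\}$. Under these choices the associated field $L$ of all totally $L_S/\bQ$ numbers coincides with the numbers totally $L_p$ over $\bQ$, so the corollary reduces to verifying the measure-theoretic hypothesis $\mu_{\varphi,p}(\overline{L_p})<1$ at the single place $v=p$.

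In the archimedean case $p=\infty$, $L_p=\bR$: the canonical measure for $z^2$ is normalized Haar measure on the complex unit circle, and the closure of $\bR$ in $\bP^1(\bC)$ is $\bR\cup\{\infty\}$, meeting the support $\{|z|=1\}$ only in $\{\pm1\}$, a set of measure zero. So the hypothesis is immediate.

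In the non-archimedean case, $z^2$ has good reduction everywhere, so $\mu_{\varphi,p}$ is the Dirac mass at the Gauss point $\zeta_G$ of $\sP^1(\bC_p)$; thus I must show $\zeta_G\notin\overline{L_p}$. I would construct an explicit open neighborhood of $\zeta_G$ disjoint from $L_p$ as follows. Writing $q=p^f$ for the size of the residue field of $L_p$ and $e$ for its ramification index, choose $k$ large enough that $p^{-k/e}<1/2$ and set $g(z)=(z^q-z)^k$. Then $|g|_{\zeta_G}=1$, realized by any $w\in\bC_p$ of norm $1$ whose residue lies outside $\bF_q$, while for every $x$ in the valuation ring of $L_p$ the Frobenius identity $\bar x^q=\bar x$ forces $|g(x)|_p\leq p^{-k/e}<1/2$. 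The open set
\[
U=\{w:|g(w)|_p>1/2\}\cap\{w:|w|_p<p^{1/e}\}
\]
then contains $\zeta_G$ (since $|z|_{\zeta_G}=1<p^{1/e}$) but no element of $L_p$: those $x$ with $|x|_p\leq 1$ violate the first condition, while those with $|x|_p>1$ have $|x|_p\geq p^{1/e}$ and so violate the second.

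The main obstacle is this explicit separation at the finite place: one must exploit both the finiteness of the residue field of $L_p$ and the discreteness of its value group to produce a Berkovich-open neighborhood of the Gauss point that avoids the classical points of $L_p$. Once the hypothesis $\mu_{\varphi,p}(\overline{L_p})<1$ is verified in both cases, Theorem \ref{thm:main} immediately yields $\liminf h(\alpha)>0$ over all totally $L_p$ algebraic numbers.
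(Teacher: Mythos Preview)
Your proposal is correct and the archimedean case matches the paper exactly. For the nonarchimedean case, however, you take a genuinely different route. The paper simply observes that $\bP^1(L_p)$ is compact (since $L_p/\bQ_p$ is finite) and the inclusion $\bP^1(L_p)\hookrightarrow\sP^1(\bC_p)$ is continuous, so its image is compact and hence closed; thus $\overline{L_p}=L_p\cup\{\infty\}$ consists entirely of Type~I (classical) points and cannot contain the Gauss point $\zeta_{0,1}$, giving $\mu_{\varphi,p}(\overline{L_p})=0$ in one line. Your argument instead builds an explicit Berkovich-open neighborhood of $\zeta_{0,1}$ using the polynomial $(z^q-z)^k$ together with the discreteness of the value group. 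The paper's approach is shorter and isolates the single structural fact actually needed (local compactness of $L_p$), whereas yours is more constructive and makes visible exactly which arithmetic features of $L_p$---finite residue field and discrete valuation---drive the separation; either is perfectly adequate here.
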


We note that for all but finitely many primes (i.e., those of good reduction), the local dynamics of $\varphi$ are trivial and resemble those of the standard height, so in particular we get the following:
\begin{cor}\label{cor:dyn-almost-all-p}
 Let $\varphi$ be a rational map defined over a number field $K$. Then for almost all finite places $v$ of $K$, we have
\[
 \liminf_{\substack{\al\in\bP^1(\Kbar)\\ \al\text{ is totally }L_v}} h_\varphi(\al)>0.
\]
 for any Galois extension $L_v/K_v$.
\end{cor}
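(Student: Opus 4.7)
The strategy is to invoke Theorem~\ref{thm:main} with $S = \{v\}$: the strong Bogomolov property for the field of numbers totally in $L_v$ over $K$ is implied by the inequality $\mu_{\varphi,v}(\overline{L_v}) < 1$, so it suffices to verify this for almost all finite places $v$ and every (finite) Galois extension $L_v/K_v$. First I would observe that $\varphi$ has good reduction at all but finitely many places of $K$: writing $\varphi = [F:G]$ with $F,G \in \cO_K[X,Y]$ coprime homogeneous forms of equal degree, the primes of bad reduction are precisely the finitely many places dividing the nonzero resultant $\Res(F,G)$.

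At any such place $v$ of good reduction, a standard calculation in Berkovich-analytic dynamics (see, e.g., Baker--Rumely or Chambert-Loir) gives that the $v$-adic canonical measure $\mu_{\varphi,v}$ is the Dirac mass at the Gauss point $\zeta_{0,v}$ of the Berkovich projective line $\sP^1(\bC_v)$: the extension of $\varphi$ to the Berkovich line fixes $\zeta_{0,v}$ with local degree $\deg\varphi$, and the equidistribution construction defining $\mu_{\varphi,v}$ collapses onto this single point.

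It therefore remains to show $\zeta_{0,v} \notin \overline{L_v}$ for any finite Galois extension $L_v/K_v$. Since such an $L_v$ is locally compact in its $v$-adic topology, its one-point compactification $L_v \cup \{\infty\}$ is compact Hausdorff and embeds continuously as a closed subset of $\sP^1(\bC_v)$ consisting entirely of classical (type~I) points. Hence $\overline{L_v} \subseteq L_v \cup \{\infty\}$ excludes the non-classical Gauss point, giving $\mu_{\varphi,v}(\overline{L_v}) = 0 < 1$, and Theorem~\ref{thm:main} delivers the desired strong Bogomolov conclusion. The only nontrivial ingredient is the identification $\mu_{\varphi,v} = \delta_{\zeta_{0,v}}$ at good reduction, which is well-documented in the literature; the remaining verifications are routine topological checks in $\sP^1(\bC_v)$.
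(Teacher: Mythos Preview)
Your proof is correct and follows essentially the same approach as the paper: both reduce to Theorem~\ref{thm:main} by observing that at the cofinitely many places of good reduction the canonical measure is the Dirac mass at the Gauss point, and that the closure $\overline{L_v}$ in $\sP^1(\bC_v)$ consists only of classical points (by compactness of $\bP^1(L_v)$) and hence misses the Gauss point. You have simply spelled out a few details (the resultant criterion for bad reduction, the type~I argument) that the paper leaves implicit or defers to the proof of the preceding corollary.
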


Let $E/K$ be an elliptic curve defined over a number field $K$ given by
\[
 E : y^2 = x^3 + ax + b. 
\]
Baker and Petsche \cite[Theorems 17, 21]{BakerPetsche} proved that if $L/\bQ$ is a totally real field, or a field all of whose completions at the places lying over a prime $p$ have a bounded ramification and intertial degrees, then the infimum and limit infimum of $\hhat(P)$ for points $P\in E(L)$ are bounded away from zero by certain explicit constants.\footnote{In fact, in the case of bounded ramification, Baker proved this earlier in \cite[\S 5, Case 1]{BakerEllCurvAbelian}.} As an easy application of our theorem, we can obtain the following qualitative version of these results:
\begin{cor}\label{cor:ell-tot-p-adic}
 Let $K$ be a number field and $E/K$ be an elliptic curve. Let $S$ be a finite set of places of $K$ which are either archimedean or for which $E$ has good reduction. For each $v\in S$ we let $L_v/K_v$ be a (finite) Galois extension, with the assumption that $L_v=K_v=\bR$ if $v\mid \infty$. Then there exists a constant $c>0$ depending only on $E/K$ and the fields $L_v$ such that
 \[
  \liminf_{\substack{P\in E(\Kbar)\\ x(P)\text{ is totally }L_S/K}} \hhat(P)\geq c>0.
 \]
\end{cor}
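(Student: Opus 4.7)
The plan is to reduce the corollary to Theorem~\ref{thm:main} by passing to the Latt\`es rational map associated with duplication on $E$. Let $\varphi:\bP^1\to\bP^1$ denote the degree-$4$ rational map uniquely determined by $\varphi\circ x = x\circ[2]$, where $x:E\to\bP^1$ is the $x$-coordinate. Using the functional equations $\hhat\circ[2]=4\hhat$ and $h_\varphi\circ\varphi=4\,h_\varphi$ together with the standard comparison $h(x(P))=2\hhat(P)+O(1)$, one has
\[
h_\varphi(x(P)) \;=\; \lim_{n\to\infty} 4^{-n}\,h\bigl(x([2^n]P)\bigr) \;=\; 2\,\hhat(P)
\quad\text{for every }P\in E(\Kbar).
\]
Hence a positive lower bound for $h_\varphi$ on the field $L$ of all totally $L_S/K$ numbers immediately yields the desired lower bound on $\hhat$ for those $P\in E(\Kbar)$ with $x(P)\in L$. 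By Theorem~\ref{thm:main} it therefore suffices to exhibit a single place $v\in S$ with $\mu_{\varphi,v}(\overline{L_v})<1$, which I would do by a short case analysis on the type of $v$.

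If $v\mid\infty$, then by hypothesis $L_v=\bR$ and the Berkovich projective line $\sP^1(\bC_v)$ is simply the Riemann sphere $\bP^1(\bC)$. The canonical measure of a Latt\`es map at an archimedean place is, by a classical computation, the pushforward under $\wp$ of normalized Haar measure on the complex torus $E(\bC)\cong\bC/\Lambda$, and is in particular absolutely continuous with respect to the spherical measure. Since $\overline{L_v}=\bP^1(\bR)$ is a smooth real $1$-dimensional curve on the sphere, it has measure zero, and so $\mu_{\varphi,v}(\overline{L_v})=0<1$.

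If instead $v$ is a finite place of good reduction for $E$, then $\varphi$ inherits good reduction from a smooth integral Weierstrass model of $E$ over $\cO_v$, and the canonical measure of a rational map with good reduction is (by standard results from the Baker--Rumely theory of potential theory on the Berkovich projective line) the Dirac mass $\delta_{\zeta_{\text{Gauss}}}$ at the Gauss point. The required inequality thus reduces to the assertion $\zeta_{\text{Gauss}}\notin\overline{L_v}$, which I would prove as follows: $\bP^1(L_v)$ is compact in its natural local-field topology since $L_v$ is a finite extension of the locally compact field $K_v$; the inclusion $\bP^1(L_v)\hookrightarrow\sP^1(\bC_v)$ is continuous, so its image is compact and hence closed in $\sP^1(\bC_v)$; therefore $\overline{L_v}\subseteq\bP^1(L_v)$, which consists entirely of type-$1$ points and so excludes the Gauss point. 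The main technical subtlety I foresee is verifying good reduction of $\varphi$ at finite places above residue characteristic $2$, where the na\"ive duplication formula has denominator divisible by $4$ and a smooth integral model of $E$ must be used; this is routine but merits attention. Once the single place $v$ is in hand, Theorem~\ref{thm:main} delivers $\liminf_{z\in L}h_\varphi(z)>0$, and the factor-of-$2$ comparison above yields the corollary.
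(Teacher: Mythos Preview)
Your proposal is correct and follows essentially the same route as the paper: reduce to Theorem~\ref{thm:main} via the Latt\`es map for $[2]$, use $h_\varphi(x(P))=2\hhat(P)$, and verify $\mu_{\varphi,v}(\overline{L_v})<1$ at each $v\in S$. The only differences are in execution: at archimedean $v$ you invoke absolute continuity of the Latt\`es measure with respect to the spherical measure (the paper instead observes that the complex Julia set is all of $\bP^1(\bC)$), and at finite $v$ you argue abstractly that $\varphi$ inherits good reduction from a smooth model of $E$ and flag the residue-characteristic-$2$ subtlety, whereas the paper computes $\Res(\varphi)=256(4a^3+27b^2)^2$ and compares it directly with $\Delta=-16(4a^3+27b^2)$.
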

\noindent We note that our hypothesis of good reduction is not always necessary, as the results of Baker and Petsche apply even in cases of semistable reduction. Naturally, this result as stated is strongest when we choose $S$ to be a single place, however, as happens in the bounds obtained by Baker and Petsche (akin to what we see in the theorem of Bombieri and Zannier), the constant should grow as the number of places $S$ increases. Determining the exact order of this growth would seem to be an interesting open question.

\subsection{Background on arithmetic dynamics}
Early work on the distribution of numbers by height focused on rational points, with results like the uniform distribution of the Farey fractions, culminating in quantitative generalization of the distribution of rational points at all places by Choi \cite{ChoiThesis,ChoiDistribution}. Shortly afterwards in a slightly different direction, Bilu \cite{Bilu} formulated an equidistribution theorem for algebraic points of small height at the archimedean place, proving that the Galois conjugates of a sequence of numbers with Weil height tending to zero must equidistribute along the unit circle in $\bC$ in the sense of weak convergence of measures. This result was later generalized to abelian varieties by Szpiro, Ullmo, and Zhang \cite{SUZ} and used in the proof of the Bogomolov conjecture by Zhang \cite{Zhang}. Later, these results were vastly generalized to dynamical heights and nonarchimedean places using the techniques of potential theory on Berkovich analytic spaces independently by Baker and Rumely \cite{BakerRumely}, Favre and Rivera-Letelier \cite{FRL, FRLcorrigendum}, and Chambert-Loir \cite{ChambertLoirEqui}. The proofs of our main theorems are rooted in this equidistribution result.

We briefly recall now the dynamical equidistribution theorem. Let $\varphi\in K(z)$ be a rational function of degree $d \geq 2$ for $K$ a number field. Then the \emph{dynamical (or canonical) height associated to $\varphi$}, first introduced by Call and Silverman \cite{CallSilverman}, is given by the Tate limit
\[
 h_{\varphi}(\al) = \lim_{n\ra\infty} \frac{1}{d^n} h(\varphi^n(\al))
\]
where as usual $\varphi^n = \varphi\circ \cdots \circ \varphi$ denotes the $n$-fold iteration. The dynamical height is characterized by the properties that
\begin{enumerate}
 \item The quantity $\abs{h_\varphi(\al) - h(\al)}$ is bounded by an absolute constant for all $\al\in \bP^1(\Qbar)$.
 \item $h_\varphi(\varphi(\al)) = d\cdot \,h_\varphi(\al)$ for all $\al\in \bP^1(\Qbar)$.
\end{enumerate}
\noindent The points $\al\in\bP^1(\Qbar)$ which satisfy $h_\varphi(\al)=0$ are precisely the preperiodic points of $\varphi$, that is, $\al$ which satisfy $\varphi^n(\al) = \varphi^m(\al)$ for some $n>m\geq 0$. In the case of $\varphi(z)=z^2$, for which $h_\varphi=h$, the preperiodic points are precisely $0$, $\infty$, and the roots of unity. We refer the reader to \cite{SilvDyn} for more background on dynamical heights.

Recent work, particularly regarding the equidistribution theorem which we will discuss in more detail below, has made it clear that the proper setting to study nonarchimedean dynamics appears to be the \emph{Berkovich analytic space} $\sP^1(\bC_v)$, where $\bC_v$ is the completion of an algebraic closure of $K_v$. For background on the Berkovich analytic space, we refer the reader to \cite{BerkovichBook,BakerRumelyBook}. While $\bP^1(\bC_v)$ is totally disconnected, not locally compact, and not spherically complete (there exist nested sequences of closed balls with empty intersection),  the Berkovich space $\sP^1(\bC_v)$ is a compact, Hausdorff, and path-connected, and it contains $\bP^1(\bC_v)$ as a dense subset. As a set, one can construct the affine Berkovich space $\sA^1(\bC_v)$ as the \emph{Berkovich spectrum} consisting of all bounded multiplicative seminorms on the normed ring $\bC_v[T]$, so each point in Berkovich space corresponds to such a seminorm. The set $\bC_v$ sits inside $\sA^1(\bC_v)$ as the evaluation seminorms $f\mapsto \abs{f(x)}_v$ for each $x\in \bC_v$. When $v\mid \infty$, this is the whole story: all bounded multiplicative seminorms come from evaluation maps at a point and $\bA^1_\Berk(\bC)=\bC$. For nonarchimedean $v$, however, there is a wealth of new points, most important amongst which in our study is the \emph{Gauss point} $\zeta_{0,1}$, which corresponds to the multiplicative seminorm
\[
 f\mapsto \sup_{\substack{z\in \bC_v\\ \abs{z}_v\leq 1}} \abs{f(z)}_v.
\]
One can check by Gauss's lemma that this does in fact define a multiplicative seminorm on $\bC_v[T]$. The Berkovich affine line is then endowed with the weakest topology for which the seminorms are continuous.

The equidistribution theorem states that any sequence of small points with respect to $\varphi$, that is, a sequence $\{\al_n\}$ of distinct numbers satisfying $h_\varphi(\al_n)\ra 0$, is equidistributed with respect to a certain \emph{canonical measure} $\mu_{\varphi,v}$ at each place $v$. More specifically, we recall the following result (see \cite{BakerRumely, FRL}):
\begin{thm*}[Dynamical equidistribution theorem]
 Let $\varphi$ be a rational map of degree at least 2 defined over the number field $K$. Let $\{\al_n\}$ be a sequence of distinct algebraic numbers satisfying $h_\varphi(\al_n)\ra 0$ and let $G_K=\Gal(\overline{K}/K)$. Let $v$ be a place of $K$, and for each $\al_n$ we define the probability measure on $\sP^1(\bC_v)$ supported equally on each of the $G_K$-conjugates of $\al_n$:
\[
 [\al_n] = \frac{1}{\# G_K\al_n} \sum_{z\in G_K \al_n} \delta_z
\]
where $\delta_z$ denotes the Dirac measure at $z$. If $\mu_{\varphi,v}$ denotes the canonical measure on $\sP^1(\bC_v)$ associated to $\varphi$ at $v$, then we have weak convergence of measures:
\[
 [\al_n] \mathop{\longrightarrow}^w \mu_{\varphi,v}\quad \text{as}\quad n\ra\infty,
\]
that is to say, for each continuous real or complex valued function $f$ on $\sP^1(\bC_v)$, we have
\[
 \frac{1}{\# G_K\al_n} \sum_{z\in G_K\al_n} f(z) \ra \int_{\sP^1(\bC_v)} f(z)\,d\mu_{\varphi,v}(z)\quad \text{as}\quad n\ra\infty.
\]
\end{thm*}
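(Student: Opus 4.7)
The plan is to reduce the equidistribution statement to an energy/positive-definiteness estimate in Berkovich potential theory, using an arithmetic identity that recognizes the global canonical height $h_\varphi(\al_n)$ as a sum of local energies of the Galois orbit measures $[\al_n]$.

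For each place $v$, I would first introduce the normalized Arakelov--Green's function $g_{\varphi,v}\colon \sP^1(\bC_v)\times\sP^1(\bC_v)\to \bR\cup\{+\infty\}$ attached to $\mu_{\varphi,v}$: a symmetric, upper semicontinuous kernel with a $-\log$ singularity along the diagonal, harmonic off the diagonal in the Berkovich sense, and normalized so that $\int g_{\varphi,v}(x,\cdot)\,d\mu_{\varphi,v}\equiv 0$. Define the energy pairing $I_v(\nu)=\iint g_{\varphi,v}\,d\nu\,d\nu$ on probability measures of finite energy. Potential theory on $\sP^1(\bC_v)$ (as developed by Baker--Rumely and Favre--Rivera-Letelier in the nonarchimedean case, going back to Bilu in the archimedean case) then supplies two crucial facts: \emph{positive-definiteness}, that $I_v(\nu-\mu_{\varphi,v})\ge 0$ with equality iff $\nu=\mu_{\varphi,v}$; and \emph{continuity}, that $I_v(\nu_n-\mu_{\varphi,v})\to 0$ forces weak convergence $\nu_n\to\mu_{\varphi,v}$. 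This is the analytic engine converting energy decay into equidistribution.

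Second, I would establish the global energy--height identity. Starting from the Tate limit defining $h_\varphi$ and applying the product formula to the resultant of the minimal polynomial of $\al$ with its Galois translates (in the style of Baker--Rumely's arithmetic Fekete--Szeg\H{o} theorem), one obtains
\[
 2\,h_\varphi(\al)=\sum_w \frac{[K_w:\bQ_w]}{[K:\bQ]}\,\iint_{x\ne y} g_{\varphi,w}(x,y)\,d[\al](x)\,d[\al](y)+\varepsilon_\al,
\]
where $w$ ranges over all places of $K$ and $\varepsilon_\al=O(\log(\#G_K\al)/\#G_K\al)$ absorbs the local diagonal defect. Each off-diagonal local integral is bounded below by $-C_w/\#G_K\al$ via discriminant/resultant estimates, so each summand is nonnegative up to a vanishing error. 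Combining this with the hypothesis $h_\varphi(\al_n)\to 0$ and Northcott's theorem (which, using that the $\al_n$ are distinct with bounded height, forces $\#G_K\al_n\to\infty$), the global sum of almost-nonnegative local energies vanishes, hence each local term does too. In particular $I_v([\al_n]-\mu_{\varphi,v})\to 0$, and the positive-definiteness/continuity of $I_v$ from the first step yields $[\al_n]\to\mu_{\varphi,v}$ weakly on $\sP^1(\bC_v)$, as desired.

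The main obstacle is the rigorous proof of the displayed identity together with quantitative control of the diagonal singularity of $g_{\varphi,v}$: literally $\iint g_{\varphi,v}\,d[\al]^2=+\infty$, so one must carefully restrict to $x\ne y$ and bound the Fekete-type sum $\sum_{i\ne j}\log|\al_i-\al_j|_v$ uniformly in terms of the discriminant. In the nonarchimedean case this also forces one to work on the Berkovich line $\sP^1(\bC_v)$ rather than $\bP^1(\bC_v)$, since only there does $\mu_{\varphi,v}$ live as a genuine Radon measure and only there does the Laplacian on the tree yield the positive-definiteness of $I_v$; the whole proof rests on this extension of classical potential theory to the Berkovich setting.
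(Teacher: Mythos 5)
This theorem is not proved in the paper at all: it is quoted as a known input, with the proof attributed to Baker--Rumely, Favre--Rivera-Letelier, and Chambert-Loir, so there is no ``paper proof'' to compare against line by line. Your sketch is, in substance, the standard proof from exactly those references: the adelic energy-pairing argument in which one writes the canonical height of $\al_n$ as a sum over places of off-diagonal energies of the orbit measures $[\al_n]$ against the Arakelov--Green's kernel $g_{\varphi,v}$, bounds each local term from below so that the vanishing of the global sum forces each local energy to vanish, and then uses positive-definiteness of the energy form on $\sP^1(\bC_v)$ to convert $I_v([\al_n]-\mu_{\varphi,v})\to 0$ into weak convergence. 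So the approach is correct and is the one the cited authors take; the caveat is that, as a standalone proof, your write-up is a roadmap rather than an argument: the three pillars you invoke --- the height--energy identity with its diagonal error term (including the precise normalization and the use of Northcott to force $\#G_K\al_n\to\infty$), the per-place lower bound on the off-diagonal energy via discriminant/resultant estimates, and the statement that energy convergence implies weak convergence (which requires regularizing the Dirac masses, since $[\al_n]$ has infinite diagonal energy, and a Dirichlet-norm or Fourier-type bound showing the energy controls integrals of test functions) --- are precisely the substantial theorems of Baker--Rumely and Favre--Rivera-Letelier, and you correctly flag them as the hard points rather than proving them. Given that the paper itself simply cites these sources, your proposal is an accurate and appropriately honest account of how the result is actually established.
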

At a finite place $v$ of good reduction of $\varphi$, the measure $\mu_{\varphi,v} = \delta_{\zeta_{0,1}}$ is the Dirac measure supported at the Gauss point $\zeta_{0,1}$. We recall that any given rational $\varphi$ has good reduction at all but finitely many places of $K$. Bilu's theorem \cite{Bilu} is the special case of this theorem for the squaring map $\varphi(z)=z^2$, for which $h_\varphi = h$ and $\mu_{\varphi,\infty}$ is the normalized Haar measure of the unit circle in $\bC$. We note that already Bilu's theorem gives a rather vivid picture (Figure \ref{fig:unitcirc}) of why totally real numbers cannot have height tending to $0$, as they clearly cannot equidistribute around the unit circle in $\bC$. The proof of our main theorem is directly inspired by this simple picture.

\begin{figure}[ht!]
\centering
\includegraphics[width=0.5\textwidth]{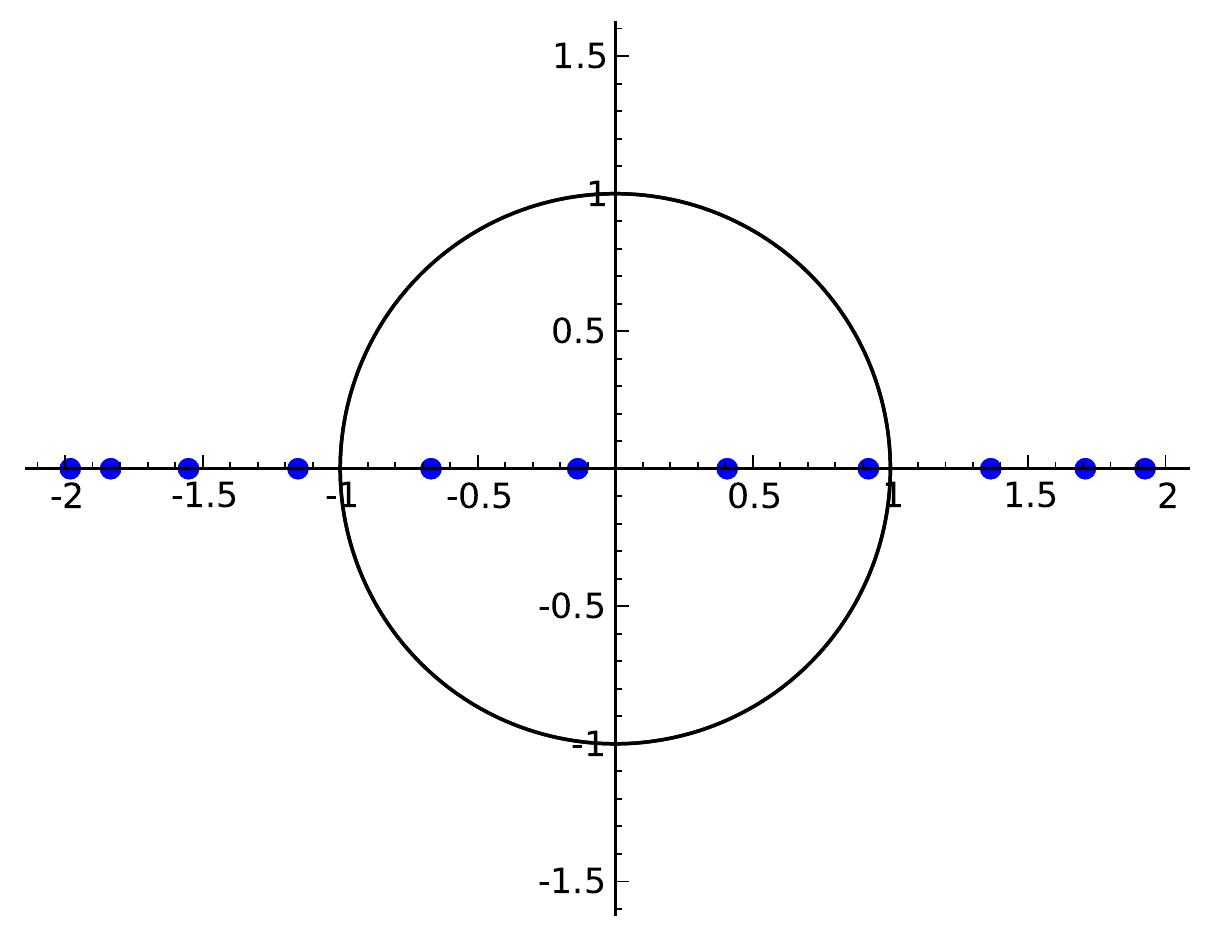} 
\label{fig:unitcirc}
\caption{The unit circle in $\bC$ and the Galois conjugates of the totally real number $\zeta_{23}+\zeta_{23}^{-1}$.}
\end{figure}

\section{Proofs}
\begin{proof}[Proof of Theorem \ref{thm:main}]
 Suppose for the sake of contradiction that there existed a sequence $\{\al_n\}\subset \bP^1(\Kbar)$ of distinct totally $L_S/K$ algebraic numbers such that $h_\varphi(\al_n)\ra 0$. Let $\mu_{\varphi,v}$ be the canonical measure and $G_K=\Gal(\Kbar/K)$. Fix an embedding $\Kbar\hookrightarrow \bC_v\subset \sP^1(\bC_v)$. Define the usual measures
\[
 [\al_n] = \frac{1}{\# G_K\al_n} \sum_{z\in G\al_n} \delta_z
\]
 where, for $z\in \bC_v$, $\delta_z$ denotes the Dirac measure at $z$. Then the equidistribution theorem tells us that we have weak convergence of measures
\[
 [\al_n] \mathop{\longrightarrow}^w \mu_{\varphi,v}.
\]
 
 Recall that we have natural continuous embeddings $\bP^1(\bC_v)\hookrightarrow \sP^1(\bC_v)$, and hence we can identify $L_v$ as a subset of $\sP^1(\bC_v)$. By assumption, $\mu_{\varphi,v}(\overline{L_v}^c) = 1 - \mu_{\varphi,v}(\overline{L_v}) > 0$, where $\overline{L_v}^c$ denotes the complement of $\overline{L_v}$ in $\sP^1(\bC_v)$. Since $\mu_{\varphi,v}$ is a regular measure, there exists a compact set $E\subset \overline{L_v}^c$ such that $\mu_{\varphi,v}(E)>0$ as well. As $E$ and $\overline{L_v}$ are disjoint closed subsets of $\sP^1(\bC_v)$, which is a compact Hausdorff space, we know by Urysohn's lemma that there exists a continuous function $f: \sP^1(\bC_v)\ra [0,1]$ which takes the value $1$ on $E$ and $0$ on $L_v$. But then
\[
 \int_{\sP^1(\bC_v)} f\,d[\al_n] = \frac{1}{\# G_K \al_n} \sum_{z\in G_K\al_n} f(z) = 0\quad\text{for all}\quad n,
\]
while
\[
 \int_{\sP^1(\bC_v)} f\, d\mu_{\varphi,v} \geq \mu_{\varphi,v}(E) > 0.
\]
Thus, for this function $f$,
\[
\lim_{n\ra\infty} \int_{\sP^1(\bC_v)} f\,d[\al_n] \neq \int_{\sP^1(\bC_v)} f\,d\mu_{\varphi,v} 
\]
but this is a contradiction to the weak convergence of measures, and completes the proof of the first part of the theorem.

Let us now prove the converse under the assumption that $\varphi$ is a polynomial. We will prove that if at all places $v\in S$, $\overline{L_v}\subset\sP^1(\bC_v)$ has full $\mu_{\varphi,v}$-measure, then there exists a sequence $\al_n\in\bP^1(\overline K)$ such that $h_\varphi(\al_n)\ra 0$.

Since $\varphi$ is a polynomial, it is a theorem (see for example \cite{BakerHsia,DeMarcoRumely,BakerRumely}) that the $v$-adic Berkovich Julia sets $J_v(\varphi)$ are compact in $\sA^1(\bC_v)$ and that the $v$-adic logarithmic capacity of each is given by
\[
 \gamma_{\infty,v}(J_v(\varphi)) = \abs{a_d}_v^{-1/(d-1)}
\]
where we have denoted by $a_d$ the leading coefficient of $\varphi$, that is, $\varphi(z)=a_d z^d + O(z^{d-1})$. Let \[\bJ = \prod_v J_v(\varphi)\] be the adelic Berkovich Julia set. Then the (normalized) adelic logarithmic capacity $\gamma_\infty(\bJ)=\prod_v \gamma_{\infty,v}(J_v)  = 1$ by the product formula. Our basic strategy will be to use the Fekete-Szeg\H o theorem with splitting conditions due to Rumely (\cite[Theorem 2.1]{RumelyFeketeII} and \cite[\S 6.7, 7.8]{BakerRumelyBook}; see also \cite{RumelyFeketeCurves}) to generate a sequence $\al_n\in L$ with $h_\varphi(\al_n)\ra 0$. Fix $\ep>0$ and let $S'$ be the union of $S$ with the infinite places of $K$ and the places of bad reduction of $\varphi$. Let $h_{\varphi,v}$ denote the usual $v$-adic local height associated to iteration of $\varphi$ and consider the adelic set 
\[
 \bU = \prod_{v\notin S'} \cD(0,1) \times \prod_{v\in S'} h_{\varphi,v}^{-1}([0,\ep)).
\]
where $\cD(0,1)$ is the closed $v$-adic Berkovich unit disc. Notice that for $v\notin S'$, $J_v(\varphi)=\{\zeta_{0,1}\}\subset\cD(0,1)$ is the usual Gauss point, and since $h_{\varphi,v}$ is nonnegative for each $v$, it follows that $\bU$ is an open adelic neighborhood of $\bJ$.

It now follows from \cite[Theorem 2.1]{RumelyFeketeII}, since at least one (in fact all) archimedean places $v$ we have $J_v(\varphi)$ is compact, that there are infinitely many totally $L_S$ algebraic numbers all whose Galois conjugates lie in the open adelic neighborhood $\bU$. Since any such number has height
$
 h_\varphi(\al) \leq \abs{S'} \cdot \ep,
$ by taking $\ep\ra 0$ we can generate an infinite sequence of mutually distinct totally $L_v$ algebraic numbers for all $v\in S$ with $h_\varphi(\al)\ra 0$, which is the desired conclusion.
\end{proof}
\begin{proof}[Proof Corollary \ref{cor:preperiodic-pts}]
 It is clear that the set of all totally $L_S/K$ numbers form a field $L$ which is normal over $K$. To see that the set of preperiodic points is finite, merely note that if an infinite number of such points existed, the limit infimum of the $\varphi$-canonical height would necessarily be $0$, contradicting Theorem \ref{thm:main}.
\end{proof}
\begin{proof}[Proof of Corollary \ref{cor:full-measure}]
 This is immediate as it is the contrapositive of the first part of the theorem.
\end{proof}
\begin{proof}[Proof of Corollary \ref{cor:totally-p-adic}]
 Let $\varphi(z)=z^2$ and $L_\infty=\bR$. Then the equilibrium measure of the (Berkovich) complex Julia set is $\mu_{\varphi,\infty} = \lambda$ is the normalized Haar measure of the unit circle $S^1$ in $\bC^\times\subset \bP^1_\Berk(\bC)=\bP^1(\bC)$. Clearly, the topological closure of $\bR$ in $\bP^1(\bC)$ is $\overline{\bR}=\bR\cup\{\infty\}$, but $\lambda(\overline{\bR}) = \lambda(\overline{\bR}\cap S^1) = \lambda(\{\pm 1\}) = 0$. Thus the theorem applies and we see that all totally $\bR$ numbers $\al\in\Qbar$, that is to say, all totally real numbers which are not zero or roots of unity (the preperiodic points of $\varphi$) must have Weil height bounded away from zero by an absolute constant.

 Now consider the case where $\varphi(z)=z^2$ and $L_p/\bQ_p$ is a Galois extension for a finite prime $p$. The equilibrium measure $\mu_{\varphi,p} = \delta_{\zeta_{0,1}}$ is the Dirac measure supported on the Gauss point $\zeta_{0,1}$. Now observe that $L_p\hookrightarrow \bP^1(L_p) \hookrightarrow \sP^1(\bC_v)$. But $\bP^1(L_p)$ is compact and the inclusion map is continuous, so $\bP^1(L_p)$ is compact (and in particular, closed) in $\sP^1(\bC_v)$. Therefore the Berkovich closure $\overline{L_p} = L_p \cup\{\infty\}$, where $\infty$ is the usual point $\infty \in \bP^1(\bC_v)$, but this clearly implies that $\zeta_{0,1}\not\in \overline{L_p}$. Therefore, $\mu_{\varphi,p}(\overline{L_v})= 0 <1$ and the theorem again applies.
\end{proof}

\begin{proof}[Proof of Corollary \ref{cor:dyn-almost-all-p}]
 We merely note that for almost all places of $K$, $\varphi$ has good reduction, which means that the Berkovich $v$-adic Julia set is the Gauss point. But then, as we argued in the proof of the above corollary, $\mu_{\varphi,v}(\overline{L_p})=0$, and so the theorem applies.
\end{proof}

Before we prove Corollary \ref{cor:ell-tot-p-adic}, let us recall some background on elliptic curves and Latt\`es maps. As before, let $E/K$ be an elliptic curve defined over a number field $K$ given by
\[
 E : y^2 = x^3 + ax + b. 
\]
Recall that the \emph{Latt\`es map} associated to the doubling map $[2]: E\ra E$, $[2](P)=2P$, is given by
\[
 \varphi(x) = \frac{x^4 - 2ax^2 - 8bx + a^2}{4x^3 +4ax +4b}.
\]
It is characterized by the fact that the following diagram commutes:
\[
\xymatrix{ 
 E(\Kbar)\ar[r]^{[2]}\ar[d]_x & E(\Kbar)\ar[d]^x \\
 \bP^1(\Kbar) \ar[r]_{\varphi} & \bP^1(\Kbar)
}
\]
where $x : E(\Kbar)\ra \bP^1(\Kbar)$ is the map onto the $x$-coordinate. If $\hhat : E(\Kbar)\ra \bR$ denotes the N\'eron-Tate height on $E$ and $h_\varphi$ is the dynamical height associated to the Latt\`es map, then
\[
 2\hhat(P) = h_\varphi(x(P)) \quad\text{for all}\quad P\in E(\Kbar).
\]
\begin{proof}[Proof of Corollary \ref{cor:ell-tot-p-adic}]
 Let us first consider the proof for $p=\infty$ and $L_p=\bR$. Recall that $\Etor(\bC)$ is dense in $E(\bC)$, so $x(\Etor(\bC))$ is dense in $\bP^1(\bC)$. It follows from this that the complex Julia set of $\varphi$ is $J_\infty = \bP^1(\bC)$. Thus, $\mu_{\varphi,\infty}(\bP^1(\bR))<1$, and our theorem implies that $P\in E(L)$ satisfy
 \[
 2\hhat(P)=h_\varphi(x(P))\geq c
 \]
 for some constant $c=c(L_p,E/K)>0$.
 
 Now suppose that $p\nmid \infty$. Let $\Delta = -16(4a^3+27b^2)$ be the discriminant of the elliptic curve. Recall that $E$ has good reduction at $v$ if and only if $\abs{\Delta}_v = 1$. We simply observe that $\Delta$ and the resultant of $\varphi$, $\Res(\varphi) = 256 \left(4 a^3+27 b^2\right)^2$, satisfy $\abs{\Delta}_v\neq 1\iff \abs{\Res(\varphi)}_v\neq 1$, and thus $E$ has good reduction if and only if $\varphi$ does. But then for some $v\mid p$, we know that the equilibrium measure $\mu_{\phi,v} = \delta_{\zeta_{0,1}}$ is the usual Dirac measure at the Gauss point and $\mu_{\phi,v}(L_p)=0$, so our theorem applies as before.
\end{proof}

\bibliographystyle{abbrv} 
\bibliography{bib-equi}        

\end{document}